\date{\today}
\newcommand{\cJ}{{\mathcal{J}}}
\newcommand{\bbB}{{\mathbb{B}}}
\newcommand{\bbR}{{\mathbb{R}}}
\newcommand{\bbC}{{\mathbb{C}}}
\renewcommand{\Re}{\text{\rm Re}}
\allowdisplaybreaks \numberwithin{equation}{section}
\newtheorem{theorem}{Theorem}[section]
\newtheorem{proposition}[theorem]{Proposition}
\theoremstyle{definition}
\title{On Complex (non  analytic) Chebyshev Polynomials in $\bbC^2$}
\author{I. Moale\thanks{Supported by the Austrian Science Fund FWF, project number: P20413--N18} \ and
P. Yuditskii\thanks{Supported by the Austrian Science Fund FWF, project number: P22025--N18}}
\begin{document}

\maketitle

\centerline{\em Dedicated to the memory of Franz Peherstorfer}
\begin{abstract}
We consider the problem of finding a best uniform approximation to the standard monomial
on the unit ball in $\bbC^2$ by polynomials of lower degree with complex
coefficients.  We reduce the problem to a one-dimensional weighted minimization
problem on an interval. In a sense, the corresponding extremal polynomials are
uniform counterparts of the classical orthogonal Jacobi polynomials. They
can be represented by means of special conformal mappings on the so-called
comb-like domains. In these terms, the value of the minimal deviation and the
representation for a polynomial of best approximation for the original
problem are given. Furthermore, we derive asymptotics for the minimal deviation.
\end{abstract}

\section{Introduction}
We consider the standard basis in the set of (non analytic) complex polynomials in $\bbC^2$:
\begin{equation*}\label{eq0}
    \{z_1^{k_1}\bar z_1^{l_1}z_2^{k_2}\bar z_2^{l_2}\}_{k_1\ge 0,l_1\ge 0, k_2\ge 0,l_2\ge 0}, \quad (z_1,z_2)\in \bbC^2.
\end{equation*}
As usual $k_1+l_1+k_2+l_2$ is called the total degree of the given monomial.

In what follows we use the following notations: $\Pi_n$ denotes the set of polynomials with
complex coefficients of total degree less or equal $n,$ and $\|P\|$ denotes the uniform norm
of $P\in\Pi_n$ in the complex ball
\begin{equation*}\label{eqadd1}
   \|P\|=\sup_{(z_1,z_2)\in\bbB}|P(z_1,z_2)|, \ \ \ \bbB=\{(z_1,z_2) \in \bbC^2: |z_1|^2+|z_2|^2\le 1\}.
\end{equation*}

Analogously to the classical Chebyshev polynomial, we consider the best approximation on the ball $\mathbb B$
of the monomial $z_1^{k_1}\bar z_1^{l_1}z_2^{k_2}\bar z_2^{l_2}$ by polynomials of total degree
less than $n:=k_1+l_1+k_2+l_2.$ Such a polynomial
$\tilde{T}_{k_1,l_1,k_2,l_2}(z_1,z_2) = z_1^{k_1}\bar z_1^{l_1}z_2^{k_2}\bar z_2^{l_2} + \ldots,$
which we call a polynomial of least deviation from zero on $\mathbb B,$ is not unique
but the minimal deviation $L_{k_1,l_1,k_2,l_2}$ is well defined,
\begin{equation*}\label{eqadd2}
    L_{k_1,l_1,k_2,l_2}:=\inf_{P\in \Pi_{n-1}}\|z_1^{k_1}\bar z_1^{l_1}z_2^{k_2}\bar z_2^{l_2}-P(z_1,z_2)\|.
\end{equation*}

It is convenient to work with the normalized polynomial
$T_{k_1,l_1,k_2,l_2}:=\tilde T_{k_1,l_1,k_2,l_2}/\|\tilde T_{k_1,l_1,k_2,l_2}\|.$
Thus
\begin{equation*}\label{eq1}
    T_{k_1,l_1,k_2,l_2}(z_1,z_2)=\Lambda_{k_1,l_1,k_2,l_2}z_1^{k_1}\bar z_1^{l_1}z_2^{k_2}\bar z_2^{l_2}+\dots,
\end{equation*}
where $\Lambda_{k_1,l_1,k_2,l_2}=1/L_{k_1,l_1,k_2,l_2}$.

\begin{figure}
  \center{
  \includegraphics[scale=0.5]{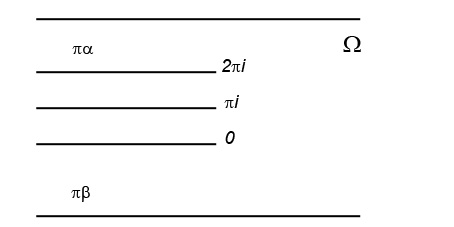}}\\
  \caption{Domain $\Omega_2(\alpha,\beta)$\label{fig1}}
\end{figure}

Concerning polynomials of least deviation from zero on the unit ball in $\bbR^2,$
several approaches are known so far, see \cite{Bra, Gea, Rei} and also \cite{MoaPeh1}.
Of foremost importance to us was the representation of the extremal polynomial
given by Bra{\ss} in \cite{Bra}. Here we essentially simplify and generalize his construction.

In approximation theory, a special role of the conformal mappings on so-called comb-like
domains is well known, see e.g. the book \cite{AKH}, the survey \cite{SYU} and the references on
original papers therein, in particular \cite{AL, Lev}.  For recent developments in this direction,
see  \cite{EYU, NPVYU}. By analogy with the MacLane-Vinberg special representation for polynomials and
entire functions \cite{McL, Vin}, in this paper to nonnegative real numbers $\alpha, \beta$ and an integer
$n \ge 0$ we associate a horizontal strip with $n+1$ horizontal cuts, see Fig. \ref{fig1}:
\begin{equation*}\label{eqadd3}
    \Omega_n(\alpha,\beta) = \{ w = u+iv : - \beta < \frac v \pi < \alpha + n \}
    \setminus \bigcup\limits_{j=0}^{n} \{ w = u+iv :  \frac v\pi= j, u \le 0 \}.
\end{equation*}
Note that the boundary of the domain contains $n+3$ infinite points:
\begin{equation*}\label{eqadd4}
\begin{split}
    \infty_0=-\infty+iv, n< \frac v \pi< n+ \alpha,&\quad  \infty_j=-\infty+iv, n-j< \frac v\pi< n-j+1, \\
    \infty_{n+1}=-\infty+iv, -\beta< \frac v \pi< 0,&\quad\infty_{n+2}=+\infty+iv, -\beta < \frac v \pi<\alpha+n.
\end{split}
\end{equation*}

Let $w: \bbC_+ \to \Omega_n(\alpha,\beta)$ be the conformal mapping of the upper half-plane onto
$\Omega_n(\alpha,\beta)$ with the following normalization
\begin{equation*}\label{eqadd5}
    w(0)=\infty_0,\quad w(1)=\infty_{n+1},\quad w(\infty)=\infty_{n+2}.
\end{equation*}
It is easy to see that it has the following asymptotics at infinity ($z\to\infty$)
\begin{equation}\label{eq2}
    w(z)=w_n(z;\alpha,\beta)=(\alpha+\beta+n)\ln z+C_n(\alpha,\beta)-\beta\pi i+ O(1/z)
\end{equation}
The real constant $C_n(\alpha,\beta)$ is uniquely defined by the domain (a kind of capacity).

Due to the evident symmetry
\begin{equation*}\label{eqsym}
    \Lambda_{k_1,l_1,k_2,l_2}=\Lambda_{l_1,k_1,k_2,l_2}=\Lambda_{k_1,l_1,l_2,k_2}
\end{equation*}
we can assume that $k_1\ge l_1$ and $k_2\ge l_2$.
Our first result is

\begin{theorem}\label{th1}
In the above introduced notations
\begin{equation*}\label{eq3}
    \ln \Lambda_{k_1,l_1,k_2,l_2} = C_{l_1+l_2}\left(\frac{k_1-l_1} 2,\frac{k_2-l_2}2\right).
\end{equation*}
\end{theorem}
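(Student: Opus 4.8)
The plan is to reduce the $\bbC^2$ problem to a one-dimensional weighted Chebyshev problem and then to recognize the logarithm of its minimal deviation as the additive constant in the asymptotics \eqref{eq2} of the comb mapping. First I would exploit the torus symmetry of the ball: given any competitor $P\in\Pi_{n-1}$, averaging it against the character $e^{-i(k_1-l_1)\theta_1-i(k_2-l_2)\theta_2}$ over the action $(z_1,z_2)\mapsto(e^{i\theta_1}z_1,e^{i\theta_2}z_2)$ produces a polynomial of the same degree that does not increase the error and carries the same angular factor as the monomial. Writing $z_1=r_1e^{i\phi_1}$, $z_2=r_2e^{i\phi_2}$ and setting $x=r_1^2$, $1-x=r_2^2$ on the sphere, the error collapses, after factoring out the unimodular angular term and the weight $r_1^{k_1-l_1}r_2^{k_2-l_2}$, to $x^{\alpha}(1-x)^{\beta}\,|x^{l_1}(1-x)^{l_2}-p(x)|$ with $\alpha=\tfrac{k_1-l_1}2$, $\beta=\tfrac{k_2-l_2}2$ and $\deg p\le N-1$, $N=l_1+l_2$. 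Since $x^{l_1}(1-x)^{l_2}$ has fixed leading coefficient, $R:=\pm(x^{l_1}(1-x)^{l_2}-p)$ sweeps all monic polynomials of degree $N$, so that
\begin{equation*}
  L_{k_1,l_1,k_2,l_2}=\min_{R\ \mathrm{monic},\ \deg R=N}\ \max_{x\in[0,1]} x^{\alpha}(1-x)^{\beta}\,|R(x)|.
\end{equation*}
The reduction from the ball to the interval is completed by a maximum-principle argument; the delicate point is the upper bound, namely that a genuinely two-variable correction $p(x,y)$ cannot undercut the value already attained on the sphere.

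The core of the proof is the identification of this weighted extremal problem with the comb mapping. Let $R^*$ be the essentially unique weighted Chebyshev extremizer; by the weighted alternation theorem it has $N$ simple zeros $x_1<\dots<x_N$ in $(0,1)$, and $x^\alpha(1-x)^\beta|R^*(x)|$ attains its maximum $L:=L_{k_1,l_1,k_2,l_2}$ at $N+1$ alternation points. Define on the upper half-plane the analytic function
\begin{equation*}
  \Theta(z)=\alpha\ln z+\beta\ln(1-z)+\ln R^*(z)-\ln L ,
\end{equation*}
with branches fixed by $\bbC_+$. I would then verify that $\Theta$ realizes the conformal map $w=w_N(\,\cdot\,;\alpha,\beta)$ of Fig.~\ref{fig1}. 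Indeed, on $(0,1)$ one has $\Im\Theta=\arg R^*$, which increases by $\pi$ across each zero and is therefore constant on each of the $N+1$ subintervals, taking the values $0,\pi,\dots,N\pi$; on each subinterval $\Re\Theta$ runs from $-\infty$ up to $0$ (exactly at the alternation point, by $\Re\Theta\le 0$ with equality there) and back to $-\infty$, so the subintervals map to the $N+1$ leftward cuts at heights $j\pi$, while the zeros and the endpoints $x=0,1$ become the infinite channels $\infty_0,\dots,\infty_{N+1}$. The endpoint singularities $\alpha\ln z$ and $\beta\ln(1-z)$ open the strip precisely to height $(\alpha+N)\pi$ at the top and $-\beta\pi$ at the bottom, and $z=0,1,\infty$ match $\infty_0,\infty_{N+1},\infty_{N+2}$, pinning the normalization.

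Granting $\Theta=w$, the theorem follows by comparing constant terms at infinity. Since $R^*$ is monic, $\ln R^*(z)=N\ln z+O(1/z)$, and on $\bbC_+$ one has $\alpha\ln z+\beta\ln(1-z)=(\alpha+\beta)\ln z-\beta\pi i+O(1/z)$, whence
\begin{equation*}
  \Theta(z)=(\alpha+\beta+N)\ln z-\ln L-\beta\pi i+O(1/z).
\end{equation*}
Comparing with \eqref{eq2} (recall $n=N=l_1+l_2$) matches the coefficient of $\ln z$ and, reassuringly, the term $-\beta\pi i$ automatically, and forces $-\ln L=C_N(\alpha,\beta)$; that is, $\ln\Lambda_{k_1,l_1,k_2,l_2}=-\ln L=C_{l_1+l_2}\bigl(\tfrac{k_1-l_1}{2},\tfrac{k_2-l_2}{2}\bigr)$, as claimed.

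The main obstacle I anticipate is the conformal identification: proving rigorously that $\Theta$ is a bijection onto $\Omega_N(\alpha,\beta)$ with exactly $N+1$ teeth and the prescribed channel normalization. This rests on converting the weighted equioscillation of $R^*$ into the precise comb geometry, in particular controlling the number and heights of the cuts and the boundary behavior near $x=0,1$, together with the standard but essential existence, uniqueness and simple-real-zeros properties of the weighted Chebyshev extremizer. The symmetrization and sphere reduction of the first paragraph are comparatively routine, but the upper-bound half must be carried out with care so that the two-dimensional approximation provably cannot beat the one-dimensional value.
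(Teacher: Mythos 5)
Your overall architecture (torus averaging, a one-dimensional weighted Chebyshev problem on $[0,1]$ with weight $t^\alpha(1-t)^\beta$, identification of that problem with the comb map, and reading off $-\ln L=C_N(\alpha,\beta)$ from the constant term at infinity) matches the paper's, and your comb-map step is a legitimate reversal of Proposition \ref{pr2}: the paper starts from the conformal map, writes it via Schwarz--Christoffel as in \eqref{eq5bis}, and then invokes the alternation theorem, which spares it the univalence verification you correctly list as an obstacle in your direction. Your final asymptotic comparison with \eqref{eq2} is correct.

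The genuine gap is the reduction from the ball to the interval, and you have misidentified which half of it is delicate. After symmetrization the problem lives on the full triangle $\Delta=\{t_1\ge 0,\ t_2\ge 0,\ t_1+t_2\le 1\}$ with $t_j=|z_j|^2$, not on the sphere $t_1+t_2=1$. Restricting any competitor to the hypotenuse immediately gives $L_{k_1,l_1,k_2,l_2}\ge L_{1D}$, where $L_{1D}$ denotes the one-dimensional minimal deviation; that direction (``a two-variable correction cannot undercut the sphere value'') is the trivial one. The hard half is $L_{k_1,l_1,k_2,l_2}\le L_{1D}$: one must produce a bivariate polynomial $t_1^{l_1}t_2^{l_2}+\dots$ whose weighted sup over \emph{all of} $\Delta$ equals the one-dimensional minimum. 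No maximum principle is available here: the competitors are non-analytic (polynomials in $|z_1|^2,|z_2|^2$), and the maximum of $t_1^\alpha t_2^\beta|P(t_1,t_2)|$ over $\Delta$ is in general attained in the interior (e.g.\ $P=1-t_1-t_2$ vanishes on the hypotenuse and is maximal at the origin). The paper closes this gap with the explicit construction \eqref{eq5}--\eqref{eq6}: split the zeros $\xi_1<\dots<\xi_{l_1+l_2}$ of the one-dimensional extremal polynomial into the first $l_1$ and the last $l_2$, form $\tilde\cJ^{(1)}_{l_1}(t_1)(-1)^{l_2}\tilde\cJ^{(2)}_{l_2}(1-t_2)$, and verify $|t_1^\alpha t_2^\beta P|\le 1$ on $\Delta$ by a three-region monotonicity argument; Proposition \ref{prop2} then yields Theorems \ref{th1} and \ref{th3} simultaneously. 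This construction is the heart of the proof and is absent from your proposal; without it, the identity $L_{k_1,l_1,k_2,l_2}=L_{1D}$ --- and hence the theorem --- is not established.
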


Below we give the representation for a polynomial $T_{k_1,l_1,k_2,l_2}(z_1,z_2)$
of least deviation from zero. For this, we establish a connection between the conformal mapping
$w_n(z;\alpha,\beta)$ and a weighted 1-D extremal problem on $[0,1]$. In a sense, in
the following proposition we define \textit{uniform Jacobi polynomials}, compare to the
classical orthogonal ones \cite{AAR}.

\begin{proposition}\label{pr2}
Let $\xi_j=w_n^{-1}(\infty_j;\alpha,\beta)$, $1\le j\le n$. Then
\begin{equation}\label{eq5bis}
      e^{w_n(t;\alpha,\beta)}=t^\alpha(1-t)^\beta
      e^{C_{n}(\alpha,\beta)}(t-\xi_1)\dots(t-\xi_{n}),
\end{equation}
where $\tilde \cJ_n(t;\alpha,\beta):=(t-\xi_1)\dots(t-\xi_{n})=t^n+\dots$
is the polynomial of least deviation from zero on $[0,1]$ with respect to the weight $t^\alpha(1-t)^\beta$.
Moreover
\begin{equation}\label{eqnorm}
   \|\tilde \cJ_n(t;\alpha,\beta)\| =
   \sup_{0\le t\le 1}t^\alpha(1-t)^\beta |\tilde \cJ_n(t;\alpha,\beta)|=e^{-C_{n}(\alpha,\beta)},
\end{equation}
that is,
\begin{equation*}\label{eq4}
    e^{w_n(t;\alpha,\beta)}=t^\alpha(1-t)^\beta \cJ_n(t;\alpha,\beta),
\end{equation*}
as before $\cJ_n(t;\alpha,\beta):=\tilde \cJ_n(t;\alpha,\beta)/\|\tilde \cJ_n(t;\alpha,\beta)\|$.
\end{proposition}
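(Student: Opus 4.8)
The plan is to establish the factorization \eqref{eq5bis} first, and then extract the extremal (minimax) characterization of the resulting monic polynomial from the geometry of the comb domain. The starting observation is that the conformal map $w_n(z;\alpha,\beta)$ sends the real axis to the boundary of $\Omega_n(\alpha,\beta)$, so on the segment $[0,1]$ (which is mapped by the normalization to the relevant part of the boundary) the imaginary part $\Im w_n$ is piecewise constant, taking the values $j\pi$ on the horizontal slits. Consequently $e^{w_n(t;\alpha,\beta)}$ is \emph{real-valued} for $t\in[0,1]$, and its sign alternates as $t$ crosses the preimages $\xi_j=w_n^{-1}(\infty_j)$ of the slit tips. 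I would make the ansatz that $e^{w_n(t;\alpha,\beta)}$, regarded as a function on $[0,1]$, is of the form $t^\alpha(1-t)^\beta$ times an entire (in fact polynomial) factor, and then verify this rigorously.

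First I would analyze the local behavior of $w_n$ at the three distinguished boundary points $0,1,\infty$ and at the slit endpoints. Near $t=0$ the map behaves like $w_n\sim \alpha\ln z$ because the prong $\infty_0$ has vertical width $\alpha\pi$; exponentiating gives the factor $t^\alpha$. Symmetrically, near $t=1$ (the preimage of $\infty_{n+1}$, width $\beta\pi$) one gets the factor $(1-t)^\beta$. The asymptotics \eqref{eq2} at $\infty$, namely $w_n(z)=(\alpha+\beta+n)\ln z+C_n(\alpha,\beta)-\beta\pi i+O(1/z)$, fix the overall growth and, crucially, the constant $C_n(\alpha,\beta)$. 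After dividing out $t^\alpha(1-t)^\beta$, the remaining function $g(t):=e^{w_n(t;\alpha,\beta)}t^{-\alpha}(1-t)^{-\beta}$ extends to an entire function of $z$ whose only zeros are simple zeros at the $\xi_j$ (the tips of the $n$ interior slits, where $e^{w_n}$ vanishes since $\Re w_n\to-\infty$ there), and whose growth at infinity is polynomial of degree exactly $n$ with leading coefficient $e^{C_n}$ read off from \eqref{eq2}. A Liouville-type argument then forces $g(t)=e^{C_n(\alpha,\beta)}(t-\xi_1)\cdots(t-\xi_n)$, which is precisely \eqref{eq5bis}.

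With the factorization in hand, the extremal property follows from an equioscillation/alternation argument. On $[0,1]$ the quantity $t^\alpha(1-t)^\beta|\tilde\cJ_n(t;\alpha,\beta)| = e^{\Re w_n(t)}$ attains its supremum exactly where $\Re w_n$ is maximal, i.e. at the $n+1$ boundary arcs between consecutive slits, where $e^{w_n}$ reaches $\pm e^{-C_n(\alpha,\beta)}$ with alternating signs. This gives \eqref{eqnorm} and, simultaneously, the classical alternation set of $n+1$ points at which the weighted polynomial attains its norm with alternating sign. By the standard Chebyshev argument, a monic degree-$n$ polynomial whose weighted modulus equioscillates at $n+1$ points is the unique polynomial of least deviation: if some other monic $Q$ had smaller weighted norm, the difference $\tilde\cJ_n-Q$ would have degree $\le n-1$ yet change sign at least $n$ times on the alternation set, forcing $n$ zeros and hence $\tilde\cJ_n\equiv Q$. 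This establishes that $\tilde\cJ_n$ is the polynomial of least deviation and completes the proof.

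The main obstacle I anticipate is the rigorous justification that $g(t)=e^{w_n}t^{-\alpha}(1-t)^{-\beta}$ extends to a single-valued entire function of polynomial growth: one must check that the logarithmic branch points of $\ln t$ and $\ln(1-t)$ introduced by the factors $t^\alpha$ and $(1-t)^\beta$ exactly cancel the corresponding branching of $w_n$ at $0$ and $1$, and that the interior slit tips contribute only simple zeros rather than additional singularities. This requires a careful local conformal analysis at each boundary vertex of $\Omega_n(\alpha,\beta)$, using the fact that the opening angles of the comb prongs are $\alpha\pi$, $\beta\pi$, and $2\pi$ (at the slit tips), together with the reflection principle to see single-valuedness across $[0,1]$. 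Once this analyticity is secured, the Liouville step and the equioscillation step are routine.
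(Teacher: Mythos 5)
Your proof is correct in substance, and its second half (the extremal property) coincides with the paper's, but you reach the factorization \eqref{eq5bis} by a genuinely different route. The paper applies the Schwarz--Christoffel formula to the \emph{derivative}, writing $dw_n = C\prod_{k=0}^n(z-\eta_k)\,/\,\bigl(z(z-1)\prod_{j=1}^n(z-\xi_j)\bigr)\,dz$ with $\eta_k=w_n^{-1}((n-k)\pi i)$, and then identifies this with the partial-fraction form $\bigl(\tfrac{\alpha}{z}+\tfrac{\beta}{z-1}+\sum_j\tfrac1{z-\xi_j}\bigr)dz$ by matching residues to the widths of the channels; integrating and exponentiating yields \eqref{eq5bis} with no discussion of single-valuedness. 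You exponentiate first and argue that $e^{w_n(z)}z^{-\alpha}(1-z)^{-\beta}$ continues to an entire function of polynomial growth with simple zeros at the $\xi_j$, then invoke Liouville. This works, but it concentrates the effort exactly where you anticipate: the reflection across all three real intervals, the branch cancellation at $0$ and $1$, and the check that each channel $\infty_j$ has width $\pi$ so that $e^{w_n}$ has a \emph{simple} zero at $\xi_j$ --- the same local data the paper encodes once and for all in the residues of $dw_n$, so the paper's route is shorter. The alternation step is identical to the paper's: the weighted polynomial attains its maximum modulus with alternating sign at the $n+1$ preimages $\eta_k$ of the slit tips (where $\Re w_n=0$ and $e^{w_n(\eta_k)}=(-1)^{n-k}$), and the Chebyshev alternation theorem finishes the proof. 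Two harmless slips to fix: the identity $t^\alpha(1-t)^\beta|\tilde\cJ_n(t;\alpha,\beta)|=e^{\Re w_n(t)}$ is off by the factor $e^{-C_n(\alpha,\beta)}$ (it is the \emph{normalized} polynomial $\cJ_n$ that satisfies $e^{w_n(t)}=t^\alpha(1-t)^\beta\cJ_n(t)$), and the extremal set consists of the slit-tip preimages rather than ``arcs between consecutive slits''; neither affects the conclusion.
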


We point out that $\xi_l<\xi_{l+1}$ for all $1\le l\le n-1$.

\begin{theorem}\label{th3}
Let $\alpha=\frac{k_1-l_1} 2\ge 0$ and $\beta=\frac{k_2-l_2} 2\ge 0.$ Let us factorize
$\tilde \cJ_{l_1+l_2}(t;\alpha,\beta) = \tilde \cJ_{l_1}^{(1)}(t)\tilde \cJ_{l_2}^{(2)}(t)$
in polynomials of degrees $l_1$ and $l_2$ respectively in the following way
\begin{equation}\label{eq5}
\begin{split}
     \tilde \cJ_{l_1}^{(1)}(t)&=(t-\xi_1)\dots(t-\xi_{l_1})\\
     \tilde \cJ_{l_2}^{(2)}(t)&=(t-\xi_{l_1+1})\dots(t-\xi_{l_1+l_2}).
\end{split}
\end{equation}
Then
\begin{equation}\label{eq6}
    T_{k_1,l_1,k_2,l_2}(z_1,z_2)=e^{C_{l_1+l_2}(\alpha,\beta)}z_1^{k_1-l_1}z_2^{k_2-l_2}
    \tilde \cJ_{l_1}^{(1)}(|z_1|^2)(-1)^{l_2}\tilde \cJ_{l_2}^{(2)}(1-|z_2|^2)
\end{equation}
\end{theorem}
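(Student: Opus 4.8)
The plan is to verify directly that the polynomial $P$ on the right-hand side of \eqref{eq6} is, up to the normalizing factor $\Lambda_{k_1,l_1,k_2,l_2}$, a polynomial of least deviation from zero. By Theorem \ref{th1} we have $\Lambda_{k_1,l_1,k_2,l_2}=e^{C_{l_1+l_2}(\alpha,\beta)}$, so $L_{k_1,l_1,k_2,l_2}=e^{-C_{l_1+l_2}(\alpha,\beta)}$, and the whole statement reduces to two checks: that $P$ has leading coefficient $\Lambda_{k_1,l_1,k_2,l_2}$ (so that $P/\Lambda_{k_1,l_1,k_2,l_2}$ is an admissible competitor), and that $\|P\|=1$ (so that this competitor realizes the infimum $L_{k_1,l_1,k_2,l_2}$).

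First I would expand the top-degree part. Writing $s=|z_1|^2$, $u=|z_2|^2$ and $C=C_{l_1+l_2}(\alpha,\beta)$, and using that $\tilde\cJ^{(1)}_{l_1}$ and $\tilde\cJ^{(2)}_{l_2}$ are monic of degrees $l_1,l_2$, the blocks $z_1^{k_1-l_1}\tilde\cJ^{(1)}_{l_1}(s)$ and $z_2^{k_2-l_2}(-1)^{l_2}\tilde\cJ^{(2)}_{l_2}(1-u)$ have top homogeneous parts $z_1^{k_1}\bar z_1^{l_1}$ and $z_2^{k_2}\bar z_2^{l_2}$ respectively; the factor $(-1)^{l_2}$ is exactly what compensates the sign produced by $u\mapsto 1-u$ in the leading term. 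Hence the total-degree-$n$ part of $P$ equals $e^{C}z_1^{k_1}\bar z_1^{l_1}z_2^{k_2}\bar z_2^{l_2}=\Lambda_{k_1,l_1,k_2,l_2}z_1^{k_1}\bar z_1^{l_1}z_2^{k_2}\bar z_2^{l_2}$, so $P/\Lambda_{k_1,l_1,k_2,l_2}$ differs from the monomial by an element of $\Pi_{n-1}$ and its deviation equals $\|P\|\,L_{k_1,l_1,k_2,l_2}$. It then remains to prove $\|P\|=1$.

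The norm is a genuine two-variable maximization: since phases do not affect $|P|$, one has $\|P\|=e^{C}\sup\Phi$ over the triangle $\{s,u\ge0,\ s+u\le1\}$, where $\Phi=s^\alpha u^\beta|\tilde\cJ^{(1)}_{l_1}(s)|\,|\tilde\cJ^{(2)}_{l_2}(1-u)|$. Substituting $t=1-u$ turns the domain into $\{0\le s\le t\le1\}$ and factors $\Phi=A(s)B(t)$ with $A(s)=s^\alpha|\tilde\cJ^{(1)}_{l_1}(s)|$ and $B(t)=(1-t)^\beta|\tilde\cJ^{(2)}_{l_2}(t)|$. On the diagonal $s=t$ (which corresponds to the sphere $|z_1|^2+|z_2|^2=1$) the factorization \eqref{eq5} gives $A(t)B(t)=t^\alpha(1-t)^\beta|\tilde\cJ_{l_1+l_2}(t;\alpha,\beta)|$, whose supremum is $e^{-C}$ by \eqref{eqnorm} and is attained; thus on the sphere $\|P\|$ already reaches the value $1$. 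The task is therefore to show $A(s)B(t)\le e^{-C}$ for all $0\le s\le t\le1$. Invoking the interlacing $\xi_{l_1}<\xi_{l_1+1}$, the function $A$ is nondecreasing on $[\xi_{l_1},1]$ and $B$ is nonincreasing on $[0,\xi_{l_1+1}]$, each being a product of nonnegative monotone factors once $t$ has passed (respectively stays below) all the relevant roots. Consequently, if $s\ge\xi_{l_1}$ then $A(s)\le A(t)$ and $\Phi\le A(t)B(t)\le e^{-C}$, while if $t\le\xi_{l_1+1}$ then $B(t)\le B(s)$ and $\Phi\le A(s)B(s)\le e^{-C}$.

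The main obstacle is the remaining gap region $s<\xi_{l_1}<\xi_{l_1+1}<t$ (which forces $l_1,l_2\ge1$), where both $A$ and $B$ may be nonmonotone and no single-variable comparison to the diagonal is available. Here I would use the equioscillation underlying Proposition \ref{pr2} (established together with the comb-domain description): between the consecutive zeros $\xi_{l_1}$ and $\xi_{l_1+1}$ of $\tilde\cJ_{l_1+l_2}$ there is an alternation point $\eta$ at which the supremum in \eqref{eqnorm} is attained, i.e.\ $A(\eta)B(\eta)=e^{-C}$. Since $s<\eta<t$ with $\eta\in[\xi_{l_1},1]$ and $\eta\in[0,\xi_{l_1+1}]$, the two monotonicities give $A(t)\ge A(\eta)$ and $B(s)\ge B(\eta)$, hence $A(t)B(s)\ge e^{-C}$. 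Combining this with the pointwise identity $[A(s)B(t)][A(t)B(s)]=[A(s)B(s)][A(t)B(t)]\le e^{-2C}$ yields $A(s)B(t)\le e^{-2C}/(A(t)B(s))\le e^{-C}$, and the division is legitimate because $A(t)>0$ and $B(s)>0$ throughout this region. Therefore $\sup\Phi=e^{-C}$, so $\|P\|=1$, the competitor $P/\Lambda_{k_1,l_1,k_2,l_2}$ realizes the minimal deviation $L_{k_1,l_1,k_2,l_2}=e^{-C}$, and $P=T_{k_1,l_1,k_2,l_2}$ as claimed.
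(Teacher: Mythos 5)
Your argument is correct in substance and is essentially the paper's own: the key step in both is the bound $\|t_1^{\alpha}t_2^{\beta}P\|_{\Delta}\le 1$, obtained by writing the weighted polynomial as a product of two univariate factors, using the equioscillation of $t^\alpha(1-t)^\beta\cJ_{l_1+l_2}$ on the diagonal $t_1+t_2=1$, and splitting the triangle into three regions handled by monotonicity of the factors. Your case split at $\xi_{l_1},\xi_{l_1+1}$ with the multiplicative identity $[A(s)B(t)][A(t)B(s)]=[A(s)B(s)][A(t)B(t)]$ in the gap region is just a repackaging of the paper's argument, which splits at the alternation point $\eta_{l_1}$ and bounds $|f_1|\le 1/|f_2|\le 1$ there; both hinge on the same point $\eta_{l_1}\in(\xi_{l_1},\xi_{l_1+1})$ where the weighted Chebyshev polynomial attains its sup. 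The one thing to fix is the opening appeal to Theorem \ref{th1}: in the paper that theorem has no independent proof --- it is obtained as a byproduct of this very construction (``which also proves Theorem \ref{th1}'') --- so citing it for $\Lambda_{k_1,l_1,k_2,l_2}=e^{C_{l_1+l_2}(\alpha,\beta)}$ is circular. Your norm computation only yields the upper bound $L_{k_1,l_1,k_2,l_2}\le e^{-C_{l_1+l_2}(\alpha,\beta)}$; the matching lower bound must come from Propositions \ref{prop1} and \ref{prop2} (symmetrization over the torus action, then restriction of any competitor to the sphere $|z_1|^2+|z_2|^2=1$ and comparison with the one-dimensional weighted Chebyshev problem), which are proved independently earlier and should be invoked in place of Theorem \ref{th1}.
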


Finally we present the following asymptotic relation for the value of the minimal deviation.
\begin{theorem}\label{th4}
Assume that the following limits exist
\begin{equation*}\label{eq8}
    \varkappa_1 = \lim_{n\to\infty}\frac{k_1}{n},\lambda_1=\lim_{n\to\infty}\frac{l_1}{n},
    \varkappa_2 = \lim_{n\to\infty}\frac{k_2}{n},\lambda_2=\lim_{n\to\infty}\frac{l_2}{n},
\end{equation*}
where $n=k_1+l_1+k_2+l_2$. Then
\begin{equation*}\label{eq9}
     \lim_{n\to\infty} L_{k_1,l_1,k_2,l_2}^{\frac 2 n}
    =(\lambda_1+\lambda_2)^{\lambda_1+\lambda_2}
     (\varkappa_1+\varkappa_2)^{\varkappa_1+\varkappa_2}
     (\lambda_1+\varkappa_2)^{\lambda_1+\varkappa_2}
     (\varkappa_1+\lambda_2)^{\varkappa_1+\lambda_2}.
\end{equation*}
\end{theorem}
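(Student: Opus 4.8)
The plan is to combine Theorem \ref{th1} with the normalization \eqref{eqnorm} and then reduce to a logarithmic–potential (weighted Chebyshev) asymptotics. By Theorem \ref{th1} and \eqref{eqnorm},
\[
  L_{k_1,l_1,k_2,l_2}=e^{-C_{m}(\alpha,\beta)}=\|\tilde\cJ_{m}(\cdot;\alpha,\beta)\|,\qquad m=l_1+l_2,\ \ \alpha=\tfrac{k_1-l_1}2,\ \ \beta=\tfrac{k_2-l_2}2,
\]
so it suffices to determine $\lim_{n\to\infty}\tfrac2n\log\|\tilde\cJ_m\|$. Writing $N=n$ and using the hypotheses, I set $a=\lim\alpha/N=\tfrac{\varkappa_1-\lambda_1}2$, $b=\lim\beta/N=\tfrac{\varkappa_2-\lambda_2}2$ and $c=\lim m/N=\lambda_1+\lambda_2$; note that $a+b+c=\tfrac12$.

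First I would pass to the associated equilibrium problem. Factoring a monic degree-$m$ competitor as $\prod_j(t-x_j)$ and dividing $\log\big(t^\alpha(1-t)^\beta|P(t)|\big)$ by $N$, the zeros equidistribute to a measure $\mu\ge0$ of mass $c$ on $[0,1]$, and the expression converges to $a\log t+b\log(1-t)+\int\log|t-s|\,d\mu(s)$. Standard upper and lower bounds for weighted Chebyshev polynomials then give
\[
  \lim_{N\to\infty}\tfrac1N\log\|\tilde\cJ_m\|=F:=\min_{\mu\ge0,\,|\mu|=c}\ \max_{t\in[0,1]}\Big[a\log t+b\log(1-t)+\int\log|t-s|\,d\mu(s)\Big],
\]
the equilibrium constant for the external field $-a\log t-b\log(1-t)$ in the presence of a free charge of mass $c$.

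Next I would identify the extremal $\mu$ through its Cauchy transform $G(z)=\tfrac{a}{z}+\tfrac{b}{z-1}+\int\frac{d\mu(s)}{z-s}$, the derivative of the complex potential. Differentiating the equilibrium identity shows $\Re G=0$ on $\operatorname{supp}\mu$; since $G\sim\tfrac1{2z}$ at infinity and carries simple poles of residues $a,b$ at $0,1$, this forces
\[
  G(z)=\frac{\sqrt{(z-t_1)(z-t_2)}}{2z(z-1)}
\]
for a single cut $[t_1,t_2]\subset[0,1]$. Matching the residues at $0$ and $1$ yields the two equations $t_1t_2=(\varkappa_1-\lambda_1)^2$ and $(1-t_1)(1-t_2)=(\varkappa_2-\lambda_2)^2$, which determine $t_1,t_2$; one also checks that the resulting density $\tfrac{\sqrt{(s-t_1)(t_2-s)}}{2\pi s(1-s)}$ is nonnegative and has mass $c$.

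Finally I would compute $F$ by integrating $G$ along the real axis: the two elementary antiderivatives of $\tfrac{\sqrt{(z-t_1)(z-t_2)}}{z}$ and $\tfrac{\sqrt{(z-t_1)(z-t_2)}}{z-1}$ produce logarithmic terms whose coefficients are exactly $\sqrt{t_1t_2}=\varkappa_1-\lambda_1$ and $\sqrt{(1-t_1)(1-t_2)}=\varkappa_2-\lambda_2$. Using $a+b+c=\tfrac12$ together with the relations for $t_1,t_2$ to simplify the arguments of these logarithms, I expect to obtain
\[
  2F=p\log p+q\log q+r\log r+s\log s,\qquad p=\lambda_1+\lambda_2,\ q=\varkappa_1+\varkappa_2,\ r=\lambda_1+\varkappa_2,\ s=\varkappa_1+\lambda_2,
\]
which is the logarithm of the asserted limit, so that $L_{k_1,l_1,k_2,l_2}^{2/n}\to e^{2F}$. (As a check, the degenerate case $c=0$ collapses the support to a point and reproduces $F=a\log t^\ast+b\log(1-t^\ast)$ at $t^\ast=\tfrac a{a+b}$, matching the formula with $p=0$.) The main obstacle is precisely this last step: carrying out the closed-form evaluation and the lengthy algebraic reduction of the endpoint data $(t_1,t_2,a,b)$ to the symmetric entropy expression in $(\lambda_i,\varkappa_i)$, together with a careful continuity/limiting treatment of the degenerate configurations (for instance $c=0$, or a support endpoint reaching $0$ or $1$ when some $\varkappa_i=\lambda_i$ or $\lambda_i=0$).
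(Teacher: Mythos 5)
Your reduction to the one--dimensional quantity is right: by Theorem \ref{th1} and \eqref{eqnorm}, $L_{k_1,l_1,k_2,l_2}=e^{-C_{l_1+l_2}(\alpha,\beta)}$, so everything hinges on the asymptotics of $\frac2n C_{l_1+l_2}$. From there you take a genuinely different route from the paper. The paper observes that the rescaled comb domains $\frac2n\Omega_{l_1+l_2}(\frac{k_1-l_1}2,\frac{k_2-l_2}2)$ converge to the limit domain $\Omega_*$ and invokes Carath\'eodory's theorem to conclude $\frac2nC_{l_1+l_2}\to C_*(\varkappa_1-\lambda_1,\varkappa_2-\lambda_2)$, then reads off $C_*$ from the explicit Schwarz--Christoffel integral \eqref{wst10}. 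You instead set up the weighted equilibrium problem for the varying weight $t^{\alpha}(1-t)^{\beta}$ with a free charge of mass $c=\lambda_1+\lambda_2$. The two are dual descriptions of the same object: your $G$ is exactly $\tfrac12 w_*'$, and your residue conditions $t_1t_2=(\varkappa_1-\lambda_1)^2$ and $(1-t_1)(1-t_2)=(\varkappa_2-\lambda_2)^2$ are the paper's relations \eqref{ab}. What the paper's route buys is that the limit constant is obtained directly, with no appeal to the theory of Chebyshev constants for varying weights; what your route buys is independence from the comb-domain picture, at the price of having to justify the $n$-th root asymptotics of weighted Chebyshev polynomials, the one-cut structure of the extremal measure (which does follow from convexity of the external field on $[0,1]$), and the identification of $G$.

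The one genuine gap is that you stop exactly where the decisive work begins: you write that you ``expect to obtain'' the entropy expression after integrating $G$, and flag the closed-form evaluation as the main obstacle. That evaluation is precisely the content of the paper's Section 4: the antiderivative of $\frac{\sqrt{(z-x_1)(z-x_2)}}{z(z-1)}$ is elementary and is given in \eqref{wst10}, its constant term at infinity yields \eqref{wst13}, and the substitutions $1\pm\alpha\pm\beta=2(\cdot)$ collapse that expression to $-\sum p\log p$ over the four combinations. So the computation does close, but as written your argument asserts the conclusion rather than derives it; to complete the proof you must actually carry out that integral and the algebraic reduction, and, for your route specifically, also treat the degenerate configurations you mention (e.g.\ $c=0$, or an endpoint of the support reaching $0$ or $1$) by a separate continuity argument, an issue the Carath\'eodory argument absorbs automatically.
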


\section{Reduction to 1-D problem}

First, we reduce our complex two-dimensional approximation problem to a weighted
approximation problem in two real variables on the standard triangle
$\Delta := \{ (t_1,t_2) \in \mathbb R^2 : t_1 \geq 0, t_2 \geq 0, t_1+ t_2 \leq 1 \}$.

For a continuous function $f$ on $\Delta,$ we define
$||f||_{\Delta} := \max\limits_{(t_1,t_2)\in \Delta} |f(t_1,t_2)|.$ By
$Y_{l_1,l_2}(t_1,t_2;\alpha,\beta) = {\rm M}_{l_1,l_2}(\alpha,\beta)t_1^{l_1} t_2^{l_2} + ...$
we denote a normalized polynomial of least deviation from zero on $\Delta$ with respect to the
weight function $t_1^{\alpha} t_2^{\beta}$.

\begin{proposition}\label{prop1}
Let $\alpha=\frac{k_1 - l_1}2$ and $\beta=\frac{k_2 - l_2}2.$ Then
\begin{equation*}
    z_1^{k_1 - l_1} z_2^{k_2 - l_2} Y_{l_1,l_2}(|z_1|^2,|z_2|^2;\alpha,\beta)
\end{equation*}
is a normalized polynomial of least deviation from zero on $\mathbb B$, that is,
\begin{equation*}\label{s21}
    \Lambda_{k_1,l_1,k_2,l_2}={\rm M}_{l_1,l_2}(\alpha,\beta).
\end{equation*}
\end{proposition}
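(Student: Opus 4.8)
The plan is to use the invariance of $\bbB$ under the torus action $(z_1,z_2)\mapsto(e^{i\theta_1}z_1,e^{i\theta_2}z_2)$ to symmetrize an arbitrary competitor. By the symmetry of $\Lambda_{k_1,l_1,k_2,l_2}$ we may assume $k_1\ge l_1$ and $k_2\ge l_2$, so $\alpha,\beta\ge 0$. Given any $P\in\Pi_{n-1}$, write $\tilde T=z_1^{k_1}\bar z_1^{l_1}z_2^{k_2}\bar z_2^{l_2}-P$ and set
\[
\bar T(z_1,z_2)=\frac{1}{(2\pi)^2}\int_0^{2\pi}\!\!\int_0^{2\pi}
e^{-i(k_1-l_1)\theta_1-i(k_2-l_2)\theta_2}\,
\tilde T(e^{i\theta_1}z_1,e^{i\theta_2}z_2)\,d\theta_1\,d\theta_2 .
\]
Under the action a monomial $z_1^{a_1}\bar z_1^{b_1}z_2^{a_2}\bar z_2^{b_2}$ picks up the factor $e^{i(a_1-b_1)\theta_1+i(a_2-b_2)\theta_2}$, so the integral annihilates every monomial except those with $a_1-b_1=k_1-l_1$ and $a_2-b_2=k_2-l_2$, leaving their coefficients unchanged. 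Since the action preserves $\bbB$ and the scalar prefactor has modulus one, the triangle inequality gives $\|\bar T\|\le\|\tilde T\|$. Thus every competitor can be replaced by an invariant one of no larger norm, and the infimum defining $L_{k_1,l_1,k_2,l_2}$ is unchanged if we restrict to invariant competitors.

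Next I would read off the shape of $\bar T$. Writing $a_1=b_1+(k_1-l_1)$, $a_2=b_2+(k_2-l_2)$ and using $z\bar z=|z|^2$, each surviving monomial equals $z_1^{k_1-l_1}z_2^{k_2-l_2}|z_1|^{2b_1}|z_2|^{2b_2}$, whence $\bar T=z_1^{k_1-l_1}z_2^{k_2-l_2}Q(|z_1|^2,|z_2|^2)$ for a polynomial $Q$ in two real variables. The crucial bookkeeping is the degree count: the term $|z_1|^{2b_1}|z_2|^{2b_2}$ contributes total degree $2(b_1+b_2)+(k_1-l_1)+(k_2-l_2)$, which is $\le n$ exactly when $b_1+b_2\le l_1+l_2$, and equals $n$ only for $b_1=l_1$, $b_2=l_2$. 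Since $P\in\Pi_{n-1}$ contributes nothing in degree $n$ and the average preserves total degree, the degree-$n$ part of $\bar T$ is exactly $z_1^{k_1}\bar z_1^{l_1}z_2^{k_2}\bar z_2^{l_2}$; hence $Q=t_1^{l_1}t_2^{l_2}+(\text{total degree}\le l_1+l_2-1)$, with top homogeneous part precisely $t_1^{l_1}t_2^{l_2}$.

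Finally I would pass to the triangle via $t_1=|z_1|^2$, $t_2=|z_2|^2$. For $(z_1,z_2)\in\bbB$ one has $(t_1,t_2)\in\Delta$, and conversely each $(t_1,t_2)\in\Delta$ is realized since the phases are free; as $|\bar T|=t_1^{\alpha}t_2^{\beta}|Q(t_1,t_2)|$ depends only on $(t_1,t_2)$, it follows that $\|\bar T\|=\sup_{(t_1,t_2)\in\Delta}t_1^{\alpha}t_2^{\beta}|Q(t_1,t_2)|$. Combining with the preceding paragraph, minimizing $\|\tilde T\|$ over $P\in\Pi_{n-1}$ is identical to minimizing $\sup_{\Delta}t_1^{\alpha}t_2^{\beta}|Q|$ over polynomials $Q=t_1^{l_1}t_2^{l_2}+(\text{degree}\le l_1+l_2-1)$ — precisely the weighted extremal problem defining $Y_{l_1,l_2}(\cdot;\alpha,\beta)$ and ${\rm M}_{l_1,l_2}(\alpha,\beta)$. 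Therefore the two minimal deviations coincide, a normalized extremal polynomial on $\bbB$ is $z_1^{k_1-l_1}z_2^{k_2-l_2}Y_{l_1,l_2}(|z_1|^2,|z_2|^2;\alpha,\beta)$, and equating the reciprocal leading coefficients gives $\Lambda_{k_1,l_1,k_2,l_2}={\rm M}_{l_1,l_2}(\alpha,\beta)$.

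The one delicate point is the degree bookkeeping in the middle step: I must ensure that symmetrization creates no degree-$n$ monomial other than the original one, so that $Q$ has top homogeneous part exactly $t_1^{l_1}t_2^{l_2}$ in the precise sense the $Y$-problem demands, rather than extra homogeneous terms of degree $l_1+l_2$. This is forced by $P\in\Pi_{n-1}$ carrying no degree-$n$ part together with degree-preservation of the torus average; the remaining identification of the ball norm with the weighted triangle norm is then routine.
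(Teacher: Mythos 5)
Your proof is correct and follows essentially the same route as the paper: torus-averaging with the phase factor $e^{-i(k_1-l_1)\theta_1-i(k_2-l_2)\theta_2}$ to reduce to polynomials of the form $z_1^{k_1-l_1}z_2^{k_2-l_2}Q(|z_1|^2,|z_2|^2)$, followed by the substitution $t_i=|z_i|^2$ mapping $\bbB$ onto $\Delta$. The only cosmetic differences are that the paper symmetrizes an extremal polynomial rather than an arbitrary competitor and explicitly takes a real part, while you correctly pin down the degree bookkeeping (the point you flag as delicate) via $P\in\Pi_{n-1}$ having no degree-$n$ part.
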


\begin{proof}
Let us remark  that due to the symmetries of $\mathbb B,$ if $T_{k_1,l_1,k_2,l_2}(z_1,z_2)$
is a polynomial of least deviation from zero, then for any $\theta_1, \theta_2 \in [0,2 \pi],$
the polynomials
\begin{equation*}\label{p1}
     T_{k_1,l_1,k_2,l_2}(e^{i \theta_1} z_1, e^{i \theta_2} z_2)
     e^{- i (k_1 - l_1) \theta_1} e^{- i (k_2 - l_2) \theta_2}
\end{equation*}
and
\begin{equation}\label{p2}
     \frac{1}{(2 \pi)^2} \int_{0}^{2 \pi} \int_{0}^{2 \pi}
     T_{k_1,l_1,k_2,l_2}(e^{i \theta_1} z_1, e^{i \theta_2} z_2)
     e^{- i (k_1 - l_1) \theta_1} e^{- i (k_2 - l_2) \theta_2} d \theta_1 d \theta_2
\end{equation}
are also polynomials of least deviation from zero.

It is easy to see that the polynomial in \eqref{p2} is of the form
\begin{equation*}
\begin{split}
    \Lambda_{k_1,l_1,k_2,l_2} z_1^{k_1 - l_1} z_2^{k_2 - l_2}
    \left[
    |z_1|^{2 l_1} |z_2|^{2 l_2} +
    \sum_{cj_1 + j_2 \leq l_1 + l_2 - 1} a_{j_1,j_2} |z_1|^{2 j_1} |z_2|^{2 j_2}
    \right] \\
    =: \Lambda_{k_1,l_1,k_2,l_2} z_1^{k_1 - l_1} z_2^{k_2 - l_2} \tilde P_{l_1,l_2}(|z_1|^2,|z_2|^2)
    \end{split}
\end{equation*}
where $a_{j_1,j_2} \in \mathbb C$. Note that
\begin{equation*}
  \hat T_{k_1,l_1,k_2,l_2}(z_1,z_2) :=  z_1^{k_1 - l_1} z_2^{k_2 - l_2}Q_{l_1,l_2}(|z_1|^2,|z_2|^2),
\end{equation*}
where $Q_{l_1,l_2}(|z_1|^2,|z_2|^2):= \Lambda_{k_1,l_1,k_2,l_2}\Re \tilde P_{l_1,l_2}(|z_1|^2,|z_2|^2),$
is still a normalized polynomial of least deviation from zero on $\bbB$.

Since $(z_1,z_2) \in \mathbb B$ is equivalent to $(t_1,t_2) \in \Delta,$ where
$t_1 := |z_1|^2,$ $t_2 := |z_2|^2,$ we have that
\begin{equation*}
    ||\hat T_{k_1,l_1,k_2,l_2}||_{\mathbb B} =
    ||t_1^{(k_1-l_1)/2} t_2^{(k_2 - l_2)/2} Q_{l_1,l_2} ||_{\Delta},
\end{equation*}
which gives the assertion.
\end{proof}

Now we give a sufficient condition for a polynomial to be a weighted polynomial
of least deviation from zero on $\Delta$. In the next section  we show the existence of
a polynomial satisfying this condition.

As before $\cJ_{n}(t;\alpha,\beta)$ denotes  the normalized polynomial of least deviation
from zero on $[0,1]$ with respect to the weight function $t^{\alpha} (1-t)^{\beta}$.

\begin{proposition}\label{prop2}
Let $\alpha, \beta \geq 0$ and let $c_n=c_{n}(\alpha,\beta)>0$ be the leading coefficient of
$\cJ_{n}(t;\alpha,\beta)$, $\cJ_{n}(t;\alpha,\beta)= c_{n}t^{n} + ...$
If there exists a polynomial
\begin{equation}\label{s23}
    P_{l_1,l_2}(t_1,t_2) = c_{l_1+l_2}(\alpha,\beta)t_1^{l_1} t_2^{l_2} + ... \
    \text{with}\ ||t_1^{\alpha}t_2^{\beta}P_{l_1,l_2}||_{\Delta} \leq 1
\end{equation}
such  that $P_{l_1,l_2}(t,1-t) =
(-1)^{l_2} \cJ_{l_1+l_2}(t;\alpha, \beta)$ for all $t \in [0,1],$ then
\begin{equation}\label{s22}
    {\rm M}_{l_1,l_2}(\alpha,\beta)=c_{l_1+l_2}(\alpha,\beta).
\end{equation}
That is, the given $P_{l_1,l_2}$ is a normalized polynomial of least deviation from
zero on $\Delta$ with respect to the weight $t_1^{\alpha} t_2^{\beta}.$
\end{proposition}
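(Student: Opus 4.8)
The plan is to translate \eqref{s22} into the language of the affine extremal problem that implicitly defines ${\rm M}$. By the symmetrization carried out in Proposition~\ref{prop1}, the competing polynomials on $\Delta$ have top-degree part exactly $t_1^{l_1}t_2^{l_2}$, so
\[
{\rm M}_{l_1,l_2}(\alpha,\beta)=\frac1{\tilde L},\qquad
\tilde L:=\inf_{P\in\mathcal A}\|t_1^{\alpha}t_2^{\beta}P\|_{\Delta},\quad
\mathcal A:=\{\,t_1^{l_1}t_2^{l_2}+R:\ \deg R\le l_1+l_2-1\,\}.
\]
I would also record the one-dimensional normalization. By \eqref{eq5bis}--\eqref{eqnorm}, $\tilde\cJ_n=\cJ_n/c_n=t^n+\dots$ is monic with $\|t^{\alpha}(1-t)^{\beta}\tilde\cJ_n\|_{[0,1]}=e^{-C_n(\alpha,\beta)}$; hence $c_n(\alpha,\beta)=e^{C_n(\alpha,\beta)}$ and $1/c_n(\alpha,\beta)$ is the least weighted deviation attained by a monic polynomial of degree $n$ on $[0,1]$. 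With these identifications \eqref{s22} is exactly the statement $\tilde L=1/c_{l_1+l_2}(\alpha,\beta)$.

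The main step, and the only genuine obstacle, is the lower bound $\tilde L\ge 1/c_{l_1+l_2}(\alpha,\beta)$, which I would extract by restricting to the hypotenuse $\{(t,1-t):t\in[0,1]\}\subset\Delta$. For $P\in\mathcal A$ set $p(t):=P(t,1-t)$. The leading monomial restricts as $t^{l_1}(1-t)^{l_2}=(-1)^{l_2}t^{l_1+l_2}+(\text{lower order})$, whereas $R(t,1-t)$ has degree $\le l_1+l_2-1$; thus $(-1)^{l_2}p(t)=t^{l_1+l_2}+\dots$ is monic of degree $l_1+l_2$. Since $t_1^{\alpha}t_2^{\beta}$ restricts to $t^{\alpha}(1-t)^{\beta}$ along the hypotenuse, and the hypotenuse lies in $\Delta$,
\[
\|t_1^{\alpha}t_2^{\beta}P\|_{\Delta}\ \ge\ \|t^{\alpha}(1-t)^{\beta}p\|_{[0,1]}
=\|t^{\alpha}(1-t)^{\beta}(-1)^{l_2}p\|_{[0,1]}\ \ge\ e^{-C_{l_1+l_2}(\alpha,\beta)},
\]
the last inequality being the 1-D extremality of $\tilde\cJ_{l_1+l_2}$ from Proposition~\ref{pr2}. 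Passing to the infimum over $\mathcal A$ gives $\tilde L\ge 1/c_{l_1+l_2}(\alpha,\beta)$, i.e. ${\rm M}_{l_1,l_2}(\alpha,\beta)\le c_{l_1+l_2}(\alpha,\beta)$. The delicate points here are purely bookkeeping: that the restriction does not lower the degree and produces a genuinely monic polynomial (up to the sign $(-1)^{l_2}$), and that the weight matches exactly on the hypotenuse.

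For the reverse inequality I would test with the polynomial $P_{l_1,l_2}$ provided by the hypothesis. Interpreting the dots in $P_{l_1,l_2}=c_{l_1+l_2}(\alpha,\beta)t_1^{l_1}t_2^{l_2}+\dots$ as terms of total degree $<l_1+l_2$, the normalized polynomial $P_{l_1,l_2}/c_{l_1+l_2}(\alpha,\beta)$ belongs to $\mathcal A$, and the constraint $\|t_1^{\alpha}t_2^{\beta}P_{l_1,l_2}\|_{\Delta}\le1$ yields $\tilde L\le 1/c_{l_1+l_2}(\alpha,\beta)$. Combining the two bounds gives $\tilde L=1/c_{l_1+l_2}(\alpha,\beta)$, which is \eqref{s22}. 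Finally the third hypothesis $P_{l_1,l_2}(t,1-t)=(-1)^{l_2}\cJ_{l_1+l_2}(t;\alpha,\beta)$ fixes the normalization: restricting once more to the hypotenuse,
\[
\|t_1^{\alpha}t_2^{\beta}P_{l_1,l_2}\|_{\Delta}\ \ge\ \|t^{\alpha}(1-t)^{\beta}\cJ_{l_1+l_2}\|_{[0,1]}=1,
\]
so the weighted norm equals $1$ and $P_{l_1,l_2}$ is indeed a normalized polynomial of least deviation from zero on $\Delta$ with respect to $t_1^{\alpha}t_2^{\beta}$.
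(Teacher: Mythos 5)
Your proof is correct and follows essentially the same route as the paper's: the bound ${\rm M}_{l_1,l_2}\le c_{l_1+l_2}$ comes from restricting to the hypotenuse $t_2=1-t_1$ and invoking the one-dimensional extremality of $\cJ_{l_1+l_2}$, and the reverse bound from using $P_{l_1,l_2}$ as a test competitor. The only cosmetic differences are that you restrict an arbitrary monic competitor rather than the extremal $Y_{l_1,l_2}$ itself, and that you spell out explicitly why $\|t_1^{\alpha}t_2^{\beta}P_{l_1,l_2}\|_{\Delta}=1$, which the paper leaves implicit.
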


\begin{proof}
Actually, we have to prove \eqref{s22}.

By the fact that $Y_{l_1,l_2}$ is a polynomial of least deviation from zero, from \eqref{s23}
we have immediately that $M_{l_1,l_2} \geq c_{l_1+l_2}.$

On the other hand, let us restrict $Y_{l_1,l_2}$ to the line $t_1=t, t_2=1-t:$
\begin{equation*}\label{s24}
    Q(t):=(-1)^{l_2}Y_{l_1,l_2}(t,1-t)={\rm M}_{l_1,l_2} t^{l_1 + l_2} + ...
\end{equation*}
Since $ | Q(t) t^{\alpha} (1 - t)^{\beta}| \leq 1 $ for all $t\in[0,1],$
the extremal property of  $\cJ_{l_1+l_2}$ implies $ {\rm M}_{l_1,l_2} \leq c_{l_1+l_2}.$
Thus the statement is proved.
\end{proof}

\section{Proofs of Proposition \ref{pr2}, Theorems \ref{th1} and \ref{th3}}

\begin{proof}[Proof of Proposition \ref{pr2}]
Let $\eta_k = w_n^{-1}((n - k) \pi i; \alpha, \beta),$ $0 \leq k \leq n.$
From the Schwarz-Christoffel formula, see e.g. \cite{DriTre}, we obtain the following
expression for the differential of the conformal mapping $w_n(z;\alpha,\beta):$
\begin{equation}\label{sch1}
     d w_n(z;\alpha,\beta) = C \frac{\prod\limits_{k=0}^n
     (z - \eta_k)}{z (z - 1) \prod\limits_{j=1}^n (z-\xi_j) } dz,
\end{equation}
where $C \in \mathbb C$ is a constant. Having in mind the asymptotic behavior at the infinite
boundary points of the domain $\Omega_n(\alpha,\beta)$ we get the following expansion into
partial fraction for \eqref{sch1}:
\begin{equation*}\label{sch2}
     d w_n(z;\alpha,\beta) = \left( \frac{\alpha}{z} + \frac{\beta}{z - 1} +
     \sum_{j=1}^n \frac 1 {z - \xi_j} \right) dz.
\end{equation*}
Hence
\begin{equation}\label{sch3}
     w_n(z;\alpha,\beta) = \alpha \ln z + \beta \ln (z - 1) + \sum_{j=1}^n \ln(z-\xi_j) + C_1,
\end{equation}
where $C_1 \in \mathbb C$ is a constant. Relation \eqref{eq5bis} follows now immediately from \eqref{sch3}.

From the boundary correspondence for the given conformal mapping we get that the function
$t^{\alpha} (1-t)^{\beta}\cJ_n(t;\alpha,\beta)$ alternates $n+1$ times between $\pm 1$  on $[0,1]$,
see Fig. \ref{fig2}. Thus the Chebyshev alternation theorem implies that $\cJ_n(t;\alpha,\beta)$
is indeed the polynomial of least deviation from zero with respect to the given weight with
leading coefficient $c_n(\alpha,\beta)=e^{C_n(\alpha,\beta)}.$
\begin{figure}
  \center{
  \includegraphics[scale=0.23]{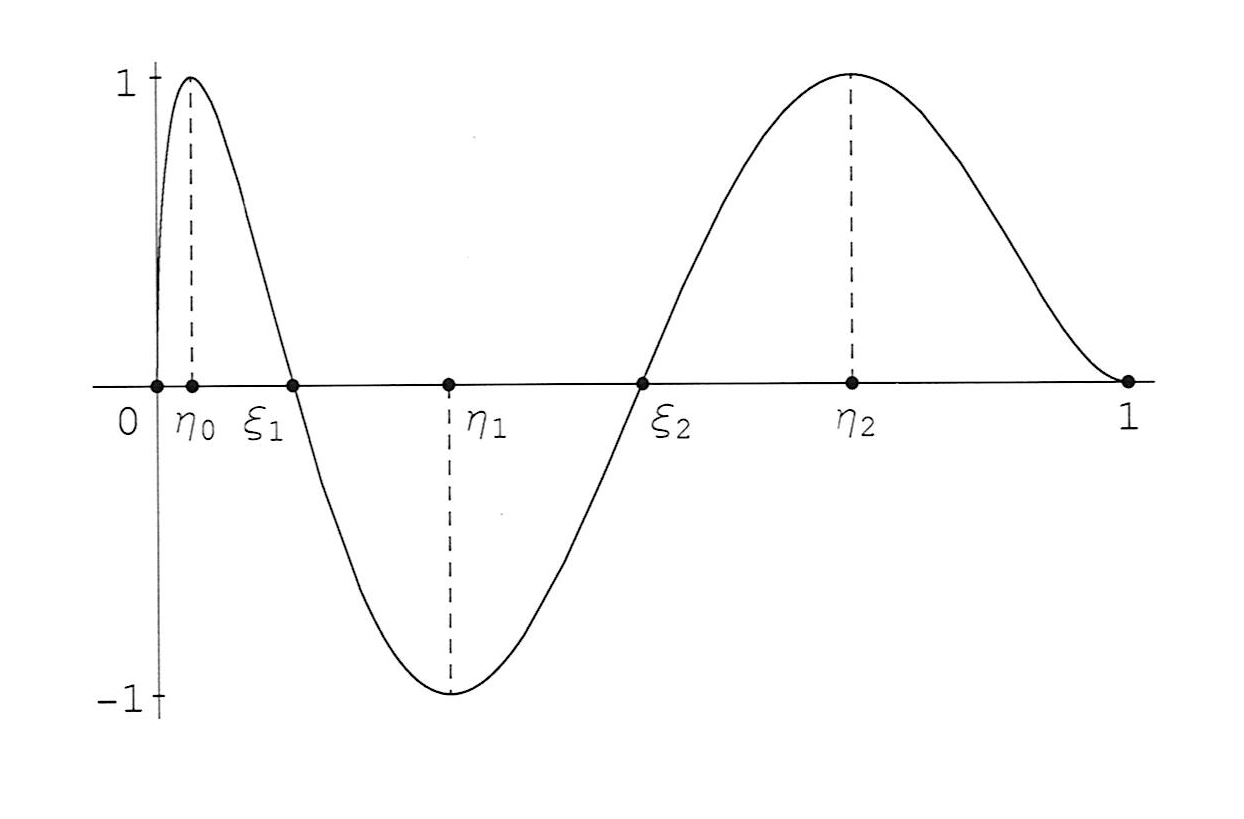}}\\
  \caption{Graph of  $t^\alpha(1-t)^\beta \cJ_2(t;\alpha,\beta)$, $\alpha=1/2, \beta=2$. \label{fig2}}
\end{figure}
\end{proof}

\begin{proof}[Proof of Theorems \ref{th1} and \ref{th3}]
By Proposition \ref{prop1}, the assertion of Theorem \ref{th3} follows if we are able to show that
\begin{equation}\label{polaux}
\begin{split}
       P_{l_1,l_2}(t_1,t_2;\alpha,\beta)
     & := e^{C_{l_1+l_2}(\alpha,\beta)} \tilde{J}^{(1)}_{l_1}(t_1)(-1)^{l_2}\tilde{J}^{(2)}_{l_2}(1-t_2) \\
     & = e^{C_{l_1+l_2}(\alpha,\beta)} t_1^{l_1} t_2^{l_2} + \ldots \\
\end{split}
\end{equation}
is a normalized polynomial of least deviation from zero on $\Delta$ with respect to the weight
$t_1^{\alpha} t_2^{\beta},$ for which we will use Proposition \ref{prop2}.

By restricting the polynomial $P_{l_1,l_2}$ to the line $t_1:=t,$ $t_2:=1-t,$ we obtain that for
all $t \in [0,1]:$
\begin{equation}\label{eqint}
P_{l_1,l_2}(t,1-t;\alpha,\beta)  = (-1)^{l_2} \cJ_{l_1+l_2}(t;\alpha,\beta).
\end{equation}
Thus it remains to show that $|| t_1^{\alpha} t_2^{\beta} P_{l_1,l_2} ||_{\Delta} \leq 1$.

Let
\begin{equation*}
    F(t_1,t_2;\alpha,\beta) := t_1^{\alpha} t_2^{\beta} P_{l_1,l_2}(t_1,t_2;\alpha,\beta).
\end{equation*}
Clearly $F(t_1,t_2;\alpha,\beta)$ is a product of two univariate functions, see \eqref{polaux}.
We normalize the first factor $f_1(t_1)$ by the condition
$f_1(\eta_{l_1})= 1$. Due to the definition of $\eta_{l_1}$  we have $F(\eta_{l_1},1-\eta_{l_1};\alpha,\beta)=(-1)^{l_2}e^{w_n(\eta_{l_1};\alpha,\beta)}=1$.
Thus
\begin{equation*}
    F(t_1,t_2;\alpha,\beta) = f_1(t_1) f_2(t_2), \quad  f_2(1 - \eta_{l_1})= 1(=f_1(\eta_{l_1})).
\end{equation*}
In addition, since $\xi_{l_1}<\eta_{l_1}$ we can easily check, see \eqref{eq5} and \eqref{eqint},
that $f_1(t)$ is strictly increasing for $t \in [\eta_{l_1},1]$, in particular $f_1(t) \geq f_1(\eta_{l_1})=1$
here. Since  $1-\xi_{l_1+1}<1-\eta_{l_1}$, $f_2(t)$ is strictly increasing and $f_2(t) \geq 1$ for $t \in [1-\eta_{l_1},1]$.

We note that by \eqref{eqint} and \eqref{eqnorm}:
\begin{equation}\label{eqint2}
|F(t,1-t;\alpha,\beta)| = t^{\alpha} (1-t)^{\beta}| {J}_{l_1+l_2}(t;\alpha,\beta)| \leq 1,
\end{equation}
for all $t \in [0,1].$
In order to show the main claim
\begin{equation}\label{est}
|F(t_1,t_2;\alpha,\beta)| \leq 1
\end{equation}
for all $(t_1,t_2)\in \Delta,$ we distinguish three regions in $\Delta.$

If $t\in [0,\eta_{l_1}]$  then \eqref{eqint2} yields
$|f_1(t)| \leq 1/ |f_2(1 - t)| \leq 1,$ where the last inequality follows by the above
listed properties of $f_2$.
Similarly we obtain $|f_2(t)| \leq 1,$ if  $t \in [0,1-\eta_{l_1}]$. Thus
\begin{equation*}
    |F(t_1,t_2;\alpha,\beta)|=|f_1(t_1)f_2(t_2)|\le 1 \quad \text{for}\ t_1\in [0,\eta_{l_1}], t_2\in [0,1-\eta_{l_1}].
\end{equation*}

If $\eta_{l_1}\le t_1 \le 1-t_2\le 1$, then since $f_1$ is increasing
on $[\eta_{l_1},1],$ it follows that $|f_1(t_1) f_2(t_2)| \leq |f_1(1-t_2) f_2(t_2)| = |F(1-t_2,t_2;\alpha,\beta)|,$
hence \eqref{est} follows by \eqref{eqint2}.

If  $1-\eta_{l_1}\le t_2\le 1-t_1\le 1$, then since $f_2$ is increasing
on $[1-\eta_{l_1},1],$ it follows that $|f_1(t_1) f_2(t_2)| \leq |f_1(t_1) f_2(1-t_1)| = |F(t_1,1-t_1;\alpha,\beta)|,$
hence \eqref{est} follows again by \eqref{eqint2}.

By combining the three cases it follows that relation \eqref{est} holds for all $(t_1,t_2)\in \Delta.$

In conclusion, relations \eqref{eqint} and \eqref{est} being proved, by Proposition
\ref{prop2} it follows that $P_{l_1,l_2}(t_1,t_2;\alpha,\beta)$ is a normalized polynomial
of least deviation from zero on $\Delta$ with respect to the weight $t_1^{\alpha} t_2^{\beta},$
and hence, the polynomial given by \eqref{eq6} is a polynomial of least deviation from zero on $\mathbb B,$
which also proves Theorem \ref{th1}.
\end{proof}

\section{Leading term in asymptotics}

We need certain properties of the conformal mapping $w_{*}$ of the upper half-plane onto the domain
\begin{equation*}\label{star}
    \Omega_*  = \{ w = u + i v : - \beta \pi < v < (1 - \beta) \pi \}
    \setminus  \{ w = u + i v :  u \le 0,  0 \leq v \leq \alpha \pi \},
\end{equation*}
see Fig. \ref{fig3}. Due to the Schwarz-Christoffel formula \cite{DriTre}, it is of the form
\begin{equation*}\label{wst1}
    w_*(z;\alpha,\beta)=\int_{x_2}^z\frac{\sqrt{(z-x_1)(z-x_2)}}{z(z-1)}dz
\end{equation*}
where $x_1, x_2$, $0<x_1<x_2<1$, are the preimages of the angle-points $\pi \alpha i$ and $0$
respectively. As before three "infinite points" in the domain  correspond to $0, 1$ and $\infty$
and due to the size of corresponding strips we have  the following relations
\begin{equation}\label{ab}
    \alpha=\sqrt{x_1x_2},\quad \beta=\sqrt{(1-x_1)(1-x_2)}.
\end{equation}

\begin{figure}
  \center{
  \includegraphics[scale=0.6]{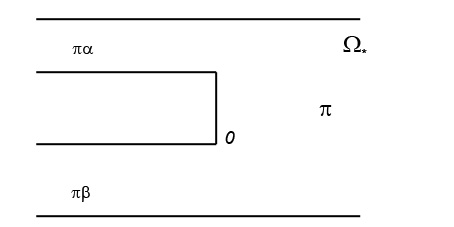}}\\
  \caption{Domain $\Omega_*(\alpha,\beta)$\label{fig3}}
\end{figure}

This is an elementary integral, so we get
\begin{equation}\label{wst10}
\begin{split}
   w_*(z;\alpha,\beta)=\sqrt{x_1x_2}\ln\frac{\left(\sqrt{x_2(1-\frac{x_1}{z})}-\sqrt{x_1(1-\frac{x_2}{z})}\right)^2}{x_2-x_1}\\
   +\sqrt{(1-x_1)(1-x_2)}\ln\frac{\left(\sqrt{(1-x_2)(1-\frac{1-x_1}{1-z})}-\sqrt{(1-x_1)(1-\frac{1-x_2}{1-z})}\right)^2}{x_1-x_2}\\
   +\ln\frac{(\sqrt{z-x_1}+\sqrt{z-x_2})^2}{x_2-x_1}.\\
 \end{split}
\end{equation}

Similarly to \eqref{eq2} we define the real constant  $C_*(\alpha,\beta)$ by the condition
\begin{equation*}\label{wsteq2}
   w_*(z;\alpha,\beta)=\ln z+C_*(\alpha,\beta)-\beta\pi i+ O(1/z), \quad z\to\infty.
\end{equation*}
By \eqref{wst10} we get
\begin{equation*}\label{wst11}
\begin{split}
   C_*(\alpha,\beta)=&\sqrt{x_1x_2}\ln\frac{(\sqrt{x_2}-\sqrt{x_1})^2}{x_2-x_1}\\
   + & \sqrt{(1-x_1)(1-x_2)}\ln\frac{\left(\sqrt{1-x_2}-\sqrt{1-x_1}\right)^2}{x_2-x_1}\\
   + & \ln\frac{4}{x_2-x_1},\\
\end{split}
\end{equation*}
which we simplify to
\begin{equation*}
\begin{split}
   C_*(\alpha,\beta)= (1-\sqrt{x_1x_2}-\sqrt{(1-x_1)(1-x_2)})\ln\frac{4}{x_2-x_1}\\
   + 2\sqrt{x_1x_2}\ln\frac{2}{\sqrt{x_2}+\sqrt{x_1}}
   + 2\sqrt{(1-x_1)(1-x_2)}\ln\frac{2}{\sqrt{1-x_2}+\sqrt{1-x_1}}.\\
\end{split}
\end{equation*}
Using \eqref{ab} we get
\begin{equation}\label{wst13}
\begin{split}
   C_*(\alpha,\beta)
   = & \frac{1-\alpha-\beta} 2\ln\frac{16}{(1-(\alpha+\beta)^2)(1-(\alpha-\beta)^2)}\\
   + & \alpha\ln\frac{4}{(1+\alpha)^2-\beta^2}+\beta\ln\frac{4}{(1+\beta)^2-\alpha^2}.
\end{split}
\end{equation}

\begin{proof}[Proof of Theorem \ref{th4}]
As the sequence of domains $\frac{2}{n}\Omega_{l_1+l_2}(\frac{k_1-l_1}{2},
\frac{k_2-l_2}{2})$ converges to $\Omega_{*}$ as $n \to \infty,$ it follows by
Carath\'{e}odory's theorem, see e.g. \cite{Ga}, that for the sequence of conformal mappings
it holds that
\begin{equation*}\label{wstadd1}
    w_*(z;\varkappa_1-\lambda_1,\varkappa_2-\lambda_2)=\lim_{n\to \infty}
    \frac 2 nw_{l_1+l_2}(z;\frac{k_1-l_1}2,\frac{k_2-l_2}2)
\end{equation*}
Therefore
\begin{equation*}\label{wstadd2}
\begin{split}
      \lim_{n\to \infty}\frac 2 n\ln \Lambda_{k_1,l_1,k_2,l_2}=
    & \lim_{n\to \infty}\frac 2 n C_{l_1+l_2}\left(\frac{k_1-l_1}2,\frac{k_2-l_2}2\right)\\
  = & C_*\left(\varkappa_1-\lambda_1,\varkappa_2-\lambda_2\right).
\end{split}
\end{equation*}

Since in this case
\begin{equation*}\label{wst15}
\begin{split}
   1- \alpha-\beta=&2(\lambda_1+\lambda_2)\\
   1+ \alpha+\beta=&2(\varkappa_1+\varkappa_2)\\
   1-\alpha+\beta=&2(\lambda_1+\varkappa_2)\\
   1+\alpha-\beta=&2(\varkappa_1+\lambda_2)\\
\end{split}
\end{equation*}
by \eqref{wst13} we get
\begin{equation*}\label{wst16}
\begin{split}
     C_*(\varkappa_1-\lambda_1,\varkappa_2-\lambda_2)=
   & -(\varkappa_1+\varkappa_2)\ln(\varkappa_1+\varkappa_2)\\
   & -(\lambda_1+\lambda_2)\ln(\lambda_1+\lambda_2)\\
   & -(\varkappa_1+\lambda_2)\ln(\varkappa_1+\lambda_2)\\
   & -(\lambda_1+\varkappa_2)\ln(\lambda_1+\varkappa_2)
\end{split}
\end{equation*}

\end{proof}

\bibliographystyle{amsplain}

\begin{thebibliography}{24}

\bibitem{AKH}
N. I. Akhiezer,
{\cyr Lektsii po teorii approksimatsii}. (Russian) [Lectures in the theory of approximation]
Second, revised and enlarged edition Izdat. ``Nauka'', Moscow 1965. 407 pp.

\bibitem{AL}
N. I. Akhiezer and B. Ya. Levin, \textit{ Generalization of S. N. Bernstein's inequality for derivatives of entire functions}. (Russian) 1960 Issledovanija po sovremennym problemam teorii funkcii kompleksnogo peremennogo pp. 111--165 Gosudarstv. Izdat. Fiz.-Mat. Lit., Moscow.

\bibitem{AAR}
G. Andrews, R. Askey, R. Roy,
\textit{Special functions.}
Encyclopedia of Mathematics and its Applications, 71. Cambridge University Press, Cambridge, 1999. 664 pp.

\bibitem{Bra}
H. Bra\ss,
\textit{Ein Beispiel zur Theorie der besten Approximation}.
Multivariate Approximation Theory, II (Oberwolfach, 1982), Internat. Ser. Numer. Math. 61,
Birkh\"auser, Basel-Boston, Mass. 1982, 59-67.

\bibitem{DriTre}
T. A. Driscoll and L. N. Trefethen,
\textit{Schwarz-Christoffel mapping}.
Cambridge Monographs on Applied and Computational Mathematics, 8. Cambridge University Press, Cambridge, 2002. 132 pp.

\bibitem{EYU}
A. Eremenko and P. Yuditskii,
\textit{ Uniform approximation of ${\rm sgn}\,x$ by polynomials and entire functions}.
J. Anal. Math. 101 (2007), 313--324.

\bibitem{Ga}
G. M. Galuzin,  \textit{Geometric Theory of Functions of a Complex Variable,}
Amer. Math. Soc. Providence.(1969), Vol. 26.

\bibitem{Gea}
W. B. Gearhart,
\textit{Some Chebyshev approximations by polynomials in two variables}.
J. Approx. Theory 8 (1973), 195-209.

\bibitem{Lev}
B. Ya. Levin, \textit{ Classification of closed sets on $R$ and representation of a majorant}. III. (Russian) Teor. Funktsii Funktsional. Anal. i Prilozhen. No. 52 (1989), 21--33; translation in J. Soviet Math. 52 (1990), no. 5, 3364--3372.

\bibitem{McL}
G. MacLane, \textit{Concerning the uniformization of certain Riemann surfaces allied to the inverse-cosine and inverse-gamma surfaces}, Trans. Amer. Math. Soc. 62 (1947), 99--113.

\bibitem{MoaPeh1}
I. Moale and F. Peherstorfer,
\textit{Explicit min-max polynomials on the disc}. Submitted.

\bibitem{NPVYU}
F. Nazarov, F. Peherstorfer, A. Volberg, P. Yuditskii,
\textit{Asymptotics of the best polynomial approximation of $\vert x\vert ^p$ and of the best
Laurent polynomial approximation of ${\rm sgn}(x)$ on two symmetric intervals}.
Constr. Approx. 29 (2009), no. 1, 23--39.

\bibitem{Rei}
M. Reimer,
\textit{On multivariate polynomials of least deviation from zero on the unit ball}.
Math. Z. 153 (1977), 51-58.

\bibitem{SYU}
M. Sodin and P. Yuditskii, \textit{Functions that deviate least from zero on closed subsets of the real axis}. (Russian) Algebra i Analiz 4 (1992), no. 2, 1--61; translation in St. Petersburg Math. J. 4 (1993), no. 2, 201--249.

\bibitem{Vin}
E. B. Vinberg, \textit{Real entire functions with prescribed critical values}, in Problems of Group Theory and Homological Algebra, Yaroslavl. Gos. U., Yaroslavl, 1989, pp. 127--138.

\end{thebibliography}

\bigskip
{\em Institute for Analysis, Johannes Kepler University,

A-4040 Linz, Austria}

E-mail: Ionela.Moale@jku.at

E-mail: Petro.Yudytskiy@jku.at
\end{document}